\newcommand{\rank}{\mathop{\mathrm{rank}}}
\newcommand{\krank}{\mathop{\boldsymbol{r}}}
\newcommand{\bigast}{\mathop{\hbox{\huge $\ast$}}}
\newcommand{\Span}{\mathop{\mathrm{Span}}\nolimits}
\newcommand{\core}{\mathop{\mathrm{Core}}\nolimits}
\newcommand{\red}{\mathop{\mathrm{red}}\nolimits}
\newcommand{\inv}{^{-1}}
\newcommand{\p}{\varphi}
\newcommand{\pinv}{\p \inv}
\def\e<{\leq _{E}}
\def\ov#1{\ensuremath{\overline {#1}}}
\def\til#1{\ensuremath{\widetilde {#1}}}
\def\malce{\mathop{\hbox{$\bigcirc$\kern-8.5pt\raise1pt
\hbox{\scriptsize$m$}\kern1.5pt}}}
\newcommand{\dom}{\mathop{\mathrm {dom}}\nolimits}
\newcommand{\ran}{\mathop{\mathrm {ran}}\nolimits}
\def\1sk{^{(1)}}
\def\to{\rightarrow}
\def\data{\ifcase\month\or January\or February \or March\or April\or May
\or June\or July\or August\or September\or October\or November \or
December\fi\space\number\day, \number\year}
\def\Thmname{Theorem}
\def\Propname{Proposition}
\def\Lemmaname{Lemma}
\def\Definitionname{Definition}
\newtheorem{Thm}{\Thmname}%[section]
\newtheorem{Prop}[Thm]{\Propname}
\newtheorem{Lemma}[Thm]{\Lemmaname}
\newtheorem{Cor}[Thm]{Corollary}
\title{On a conjecture of Karrass and Solitar}
\author{Benjamin Steinberg}
\address{Department of Mathematics\\ City College of New York\\ Convent Ave at
138th St.\\ New York, New York 10031}
\thanks{This work was partially supported by a grant from the Simons Foundation (\#245268 to Benjamin Steinberg).} \email{bsteinberg@ccny.cuny.edu}
\date{March 29, 2013; revised November 6, 2013}
\subjclass{20E06,20F65}
\keywords{Free product, Kurosh rank}
\begin{document}
\begin{abstract}
We settle an old conjecture of Karrass and Solitar by proving that a finitely generated subgroup of
a non-trivial free product $G = A\ast B$ has finite index if and only if it
intersects non-trivially each non-trivial normal subgroup of $G$. This holds, more generally, for subgroups of finite Kurosh rank.
\end{abstract}

\maketitle

\section{Introduction}
Karrass and Solitar deduced in~\cite{KS1}, as an easy consequence of
M.~Hall's theorem~\cite{Hall2}, that a finitely generated subgroup of
a free group $F$ has finite index if and only if it intersects
non-trivially every non-trivial normal subgroup of $F$. This result was then forgotten for a number of years until it was revisited by Arzhantseva, who in her 1998 Ph.~D.\ thesis proved a quantitative version of this result: if $H$ is a finitely generated subgroup of infinite
index in a free group $F$, then the normal closure of almost any finite set of elements of $F$ misses $H$. Her approach was via Stallings graphs and small cancellation theory. The result was published in~\cite{Goulnara}.
In the proceedings of a 1998 conference in Lincoln, Nebraska, another proof of the result of Karrass and Solitar was given by  Ivanov and Schupp~\cite{IS}, who apparently were unaware of the result of Karrass and Solitar. The result of Karrass and Solitar does appear in the 1977 book of Lyndon and Schupp from twenty years earlier~\cite[Chapter~1, Proposition~3.17]{LyndonandSchupp}.  The proof of Ivanov and Schupp also used Stallings graphs and small cancellation theory.  Shortly after, Kahrobaei~\cite{Delaram} offered another simple proof using Marshall Hall's theorem.

In a paper~\cite{KS2}, published in the same year as~\cite{KS1}, Karrass and Solitar conjectured that a finitely generated subgroup of a free product $G$ of non-trivial
groups has finite index if and only
if it intersects non-trivially each non-trivial normal subgroup of $G$.  We settle here this conjecture affirmatively.  Our proof follows that of Arzhantseva and of Ivanov and Schupp, but uses covering spaces instead of Stallings graphs and makes use of an analogue of the Spelling Theorem, due to Duncan and Howie~\cite{DuncanHowie}, instead of classical small cancellation theory. In fact, we prove more generally the following theorem.

\begin{Thm}\label{main}
Let $G=A\ast B$ be a free product of non-trivial groups $A$ and $B$ and let $H\leq G$ be a subgroup of finite Kurosh rank.  Then $[G:H]<\infty$ if and only if $H\cap N\neq \{1\}$ for every normal subgroup $\{1\}\neq N\lhd G$.
\end{Thm}

We shall recall the notion of Kurosh rank in Section~\ref{core}.  Suffice it to say for our purposes that finitely generated subgroups have finite Kurosh rank.  Consequently, the conjecture of Karass and Solitar is now a theorem.

\begin{Thm}\label{main2}
Let $G=A\ast B$ be a free product of non-trivial groups $A$ and $B$.  Then a finitely generated subgroup of $G$ has finite index if and only if it intersects non-trivially each non-trivial normal subgroup of $G$.
\end{Thm}

Theorem~\ref{main2} can be seen as a strong generalization of the theorem of B.~Baumslag~\cite{BBaumslag} that any finitely generated subgroup of a free product of non-trivial groups, which contains a non-trivial normal subgroup is of finite index (since in a free product of non-trivial groups, any two non-trivial normal subgroups intersect non-trivially).

\section{The core $2$-complex of a subgroup}\label{core}
The purpose of this section is to associate to each subgroup $H\leq G=A\ast B$ a $2$-complex $\core(H)$ that plays the role of the Stallings graph of a subgroup in the theory of free groups.  Several core-like constructions have been considered before in the literature~\cite{BurnsCompany,Ivanov,sykiotis}, but ours seems to be a new variation, in particular because it is a $2$-complex.

Fix groups $A\neq \{1\}\neq B$ and let $G=A\ast B$.  If $C$ is a group, put $\til C=C\setminus \{1\}$. The free monoid on a set $X$ is denoted $X^*$.  Sometimes, we will write $(x)$ for $x\in \til A\cup \til B$ if we want to emphasize that we are viewing $x$ as an element of $(\til A\cup \til B)^*$ rather than as an element of $G$.  A word $\gamma$ in $(\til A\cup \til B)^*$ is called \emph{reduced} if it contains no factors of the form $(x)(y)$ with both $x,y\in \til A$ or $x,y\in \til B$.  It is well known~\cite[Chapter~IV, Theorem~1.2]{LyndonandSchupp} that any element of $G$ is represented by a unique reduced word and that any word can be put into its reduced form using the following elementary rewriting rules:
\begin{align}\label{rule1}
 (x)(x\inv)\rightarrow 1, &\quad \text{for $x\in \til A\cup \til B$}\\ \label{rule2} (a)(a')\rightarrow (aa'), & \quad \text{for $a,a'\in \til A$, $a'\neq a\inv$}\\ \quad \label{rule3} (b)(b')\rightarrow (bb'), &\quad \text{for $b,b'\in \til B$, $b'\neq b\inv$.}
\end{align}

For $\gamma\in (\til A\cup \til B)^*$, we write $\red(\gamma)$ for the unique reduced word equivalent to it in $G$.  Abusing notation, denote by $\red(g)$ the unique reduced word representing $g\in G$.  Let $\ell(g)$ be the length of the word $\red(g)$. A word $\gamma\in (\til A\cup \til B)^*$ is said to be \emph{cyclically reduced} if $\gamma^2$ is reduced, or equivalently, if $\gamma^n$ is reduced for all $n\geq 0$. Let us say that $g\in G$ is \emph{cyclically reduced} if $\red(g)$ is cyclically reduced.  There is an involution on $(\til A\cup \til B)^*$ defined in the usual way: $[(x_1)\cdots (x_n)]\inv = (x_n\inv)\cdots (x_1\inv)$. The involution preserves the properties of being reduced and of being cyclically reduced.

If $C$ is a group, let $K(C)$ be the $2$-complex with a single vertex $v_0$, edge set $\til C$ (consisting of loops) and with $2$-cells of the form
\begin{center}\begin{tikzpicture}[->,shorten >=1pt,%
auto,node distance=2cm,semithick,
inner sep=1pt,bend angle=45]
\tikzset{every state/.style={minimum size=0pt}}.
\node[state] (A) {\footnotesize{$v_0$}};
\node[state] (B) [right of=A] {\footnotesize{$v_0$}};
\tikzstyle{every node}=[font=\footnotesize]
\path (A) edge [bend left] node [above]  {$x$} (B)
      (B) edge [bend left] node [below] {$x\inv$} (A);
\end{tikzpicture}\qquad\quad and\qquad\quad
\begin{tikzpicture}[->,shorten >=1pt,%
auto,node distance=2cm,semithick,
inner sep=1pt,bend angle=45]
\tikzset{every state/.style={minimum size=0pt}}.
\node[state] (A) {\footnotesize{$v_0$}};
\node[state] (C) [above right of=A] {\footnotesize{$v_0$}};
\node[state] (B) [below right of=C] {\footnotesize{$v_0$}};
\tikzstyle{every node}=[font=\footnotesize]
\path (A) edge node  [below] {$x$}  (B)
      (B) edge node  [above right] {$y$}  (C)
      (A) edge node  {$xy$} (C);
\end{tikzpicture}
\end{center}
where $x\in C$ and $y\neq x\inv$.
Of course, $\pi_1(K(C),v_0)\cong C$.  Let $K=K(A)\vee K(B)$ be the wedge product.  Then $\pi_1(K(A)\vee K(B),v_0)\cong G$.

If $X$ is a $2$-complex and $V\subseteq X^{(0)}$ is a set of vertices, then the subcomplex $X[V]$ \emph{induced} by $V$ has $1$-skeleton $X[V]^{(1)}$ consisting of $V$, together with all edges of $X$ between vertices of $V$.  A $2$-cell $c$ of $X$ belongs to $X[V]$ if the attaching map $\partial c\to X^{(1)}$ has image contained in $X[V]^{(1)}$. A subcomplex of the form $X[V]$ for some set of vertices $V$ is called an \emph{induced subcomplex}.

Fix a subgroup $H\leq G$ for the remainder of this section and let \[\p\colon (\til K_H,w_0)\to (K,v_0)\] be the pointed covering map with $\p_*(\pi_1(\til K_H,w_0))=H$. By a path in a $2$-complex, we shall always mean an edge path. Each path in $\til K_H$ is labelled by a word in $(\til A\cup \til B)^*$.  Let us say that the path is \emph{reduced} if its label is reduced. By usual covering space theory, a path $p$ is homotopy equivalent, rel base points, to a unique reduced path $\red(p)$, whose label, moreover, is the reduced form of the label of $p$.

\begin{Lemma}\label{stayininduced}
Let $X\subseteq \til K_H$ be an induced subcomplex and let $p$ be a path in $X$.  Then $\red(p)$ is also a path in $X$ and $p$ is homotopy equivalent, rel base points, to $\red(p)$ inside of $X$.
\end{Lemma}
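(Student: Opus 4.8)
The plan is to reduce the label of $p$ one elementary step at a time, checking that every single reduction can be carried out by a homotopy rel base points that never leaves $X$, and then to induct on the number of edges of $p$. Write $X=\til K_H[V]$, so that $X^{(1)}$ consists of $V$ together with every edge of $\til K_H$ joining two vertices of $V$, and a $2$-cell of $\til K_H$ lies in $X$ exactly when its boundary is contained in $X^{(1)}$. If $p$ is already reduced then $\red(p)=p$ and there is nothing to prove. Otherwise the label of $p$ has a factor $(x)(y)$ with $x,y$ in the same free factor, occurring at a consecutive pair of edges $e,e'$ of $p$; I will homotope $p$ inside $X$ to a shorter path $p'$. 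Since $p\simeq p'$ rel base points, the two paths have the same reduced path, so $\red(p')=\red(p)$, and the inductive hypothesis applied to $p'$ finishes the argument.

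There are two kinds of reduction. If $y=x\inv$ (rule~\eqref{rule1}), then because $\p$ is a covering map $e'$ is the reverse of $e$, so $ee'$ is a backtrack; retracting it along $e$ is a homotopy supported on $e$ and its two endpoints, all of which lie in $X$ since $p$ does, and it yields a path $p'$ with two fewer edges. If instead $x,y$ lie in a common free factor with $y\neq x\inv$ (rule~\eqref{rule2} or~\eqref{rule3}), let $e$ be the $x$-edge from $u$ to $w$ and $e'$ the $y$-edge from $w$ to $z$. By the lifting property for the $2$-cells of a covering, the triangular $2$-cell of $K$ witnessing $(x)(y)=(xy)$ lifts, based at $u$, to a $2$-cell $c$ of $\til K_H$ whose attaching loop consists of $e$, then $e'$, then an edge $f$ labelled $xy$ traversed in reverse. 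Since this lifted loop is closed, $f$ runs from $u$ to $z$, and pushing $ee'$ across $c$ onto $f$ replaces $p$ by a path $p'$ with one fewer edge.

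The only point at which I use that $X$ is \emph{induced} is to confirm that the $2$-cell $c$ of the second case belongs to $X$. Its boundary is made up of $e$ and $e'$, which are in $X$ because $p$ is, together with the new edge $f$. Now $f$ joins $u$ and $z$, and both of these are vertices traversed by $p$, hence both lie in $V$; since $X=\til K_H[V]$ contains \emph{every} edge of $\til K_H$ between vertices of $V$, the edge $f$ lies in $X^{(1)}$. Thus $\partial c\subseteq X^{(1)}$, so $c$ is a $2$-cell of $X$ and the homotopy pushing $ee'$ across $c$ stays inside $X$. In either case $p'$ is a path in $X$ with strictly fewer edges and $\red(p')=\red(p)$; composing the elementary homotopies produced along the way exhibits $p\simeq\red(p)$ rel base points within $X$ and shows $\red(p)$ is a path in $X$.

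I expect the heart of the matter to be the verification, in the last paragraph, that the new edge $f$ created by a rule~\eqref{rule2} or~\eqref{rule3} reduction has \emph{both} endpoints already on $p$; this is exactly what couples the combinatorics of the lifted triangular $2$-cells to the defining property of an induced subcomplex, and it is why ``induced'' is the correct hypothesis. The backtrack case and the termination of the reduction are then routine.
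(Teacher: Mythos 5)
Your overall route is the paper's: peel off one elementary rewriting step at a time, realize each step by a $2$-cell of $\til K_H$ lifted from $K$, and use the induced hypothesis to keep everything inside $X$; your handling of rules \eqref{rule2}--\eqref{rule3} is exactly the paper's argument. However, your rule-\eqref{rule1} case rests on a false claim, namely that ``because $\p$ is a covering map, $e'$ is the reverse of $e$, so $ee'$ is a backtrack.'' In the complex $K(C)$ the edge set is $\til C$, and for $x\neq x\inv$ the loops labelled $x$ and $x\inv$ are two \emph{distinct} $1$-cells, not formal reverses of one another; this is precisely why the construction includes the bigon $2$-cells with boundary $(x)(x\inv)$ --- they are what force these two distinct edges to become homotopy inverses. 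A covering map preserves cell structure, so in $\til K_H$ the edge $e'$ labelled $x\inv$ emanating from the endpoint of $e$ is again a $1$-cell distinct from $e$. The path $ee'$ is indeed closed (because the bigon $2$-cell of $K$ lifts), but it is not a backtrack, and the ``retraction along $e$'' you describe does not exist: you are implicitly treating $\til K_H$ as if it were a Stallings graph, where inverse letters label reversed edges, which is not how this $2$-complex is built.

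The repair is the paper's argument: the bigon lifts to a $2$-cell $c$ of $\til K_H$ with boundary $ee'$; since $e$ and $e'$ are edges of $p$, hence of $X$, and $X$ is induced, the cell $c$ belongs to $X$, and pushing across $c$ homotopes $p_1ee'p_2$ to $p_1p_2$ rel endpoints inside $X$. This also corrects the claim in your final paragraph: the induced hypothesis is \emph{not} used only in the triangle case --- it is needed in the rule-\eqref{rule1} case as well, to guarantee that the lifted bigon, whose boundary lies in $X^{(1)}$, is actually a cell of $X$ (a general subcomplex may contain the boundary of a $2$-cell without containing the $2$-cell). With that one case rewritten, the rest of your proof (the triangle case, the induction on the number of edges, and the identification $\red(p')=\red(p)$) is correct and coincides with the paper's.
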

\begin{proof}
It suffices to show that the result of applying one of the rewriting rules \eqref{rule1}--\eqref{rule3} to the label of $p$ is the label of a path contained in $X$, which is homotopic to $p$ in $X$, rel base points.  If $p$ has a factorization $p=p_1efp_2$ where $e,f$ are edges with respective labels $x,x\inv$ with $x\in \til A\cup \til B$, then the path $ef$ bounds a $2$-cell $c$ in $\til K_H$ by construction of $K$ and because $\p$ is a covering map.  See Figure~\ref{figurerule1}.  Since $X$ is an induced subcomplex, $c$ belongs to $X$.  Thus $p$ is homotopy equivalent to $p_1p_2$ in $X$, rel base points.  This takes care of rewriting rule~\eqref{rule1}.

\begin{figure}[bt]
\begin{center}
\begin{tikzpicture}[->,shorten >=1pt,%
auto,node distance=2cm,semithick,
inner sep=1pt,bend angle=45]
\tikzset{every state/.style={minimum size=0pt}}.
\node[state] (A) {};
\node[state] (B) [right of=A] {};
\node[state] (C) [right of=B] {};
\node[state] (D) [above of=B]   {};
\tikzstyle{every node}=[font=\footnotesize]
\path (A) edge node [above]  {$p_1$} (B)
      (B) edge  node [above] {$p_2$} (C)
      (B) edge [bend left] node [left]  {$x$} (D)
      (D) edge [bend left] node [right] {$x\inv$} (B);
\end{tikzpicture}
\end{center}
\caption{Rewriting rule of type \eqref{rule1}\label{figurerule1}}
\end{figure}
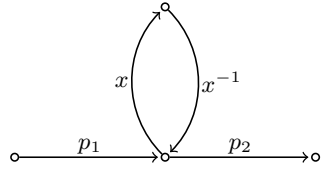

We now handle rewriting rule~\eqref{rule2}, as the case of \eqref{rule3} is analogous. Suppose $p=p_1efp_2$ where $e,f$ are edges with respective labels $a,a'\in \til A$ with $a'\neq a\inv$.  Let $v$ be the initial vertex of $e$ and $w$ the terminal vertex of $f$.  Because $\p$ is a covering and $(a)(a')(aa')\inv$ bounds a $2$-cell in $K$, there is an edge $e'$ from $v$ to $w$ labelled by $aa'$. See Figure~\ref{figurerule2}. As $X$ is an induced subcomplex and $v,w$ are vertices of $X$, the edge $e'$ belongs to $X$ and so $p_1e'p_2$ is a path in $X$.  Moreover, since $\p$ is a covering $ef(e')\inv$ bounds a $2$-cell $c$, which furthermore belongs to $X$ because $X$ is an induced subcomplex.  It follows that $p$ is homotopic to $p_1e'p_2$ in $X$, rel base points.
\begin{figure}[bt]
\begin{center}
\begin{tikzpicture}[->,shorten >=1pt,%
auto,node distance=2cm,semithick,
inner sep=1pt,bend angle=45]
\tikzset{every state/.style={minimum size=0pt}}.
\node[state] (A) {};
\node[state] (B) [right of=A] {};
\node[state] (C) [below right of=B] {};
\node[state] (D) [above right of=C] {};
\node[state] (E) [right of=D] {};
\tikzstyle{every node}=[font=\footnotesize]
\path (A) edge node [above]  {$p_1$} (B)
      (B) edge  node [below left] {$a$}   (C)
      (C) edge  node [below right]  {$a'$}  (D)
      (B) edge  node [above] {$aa'$} (D)
      (D) edge  node [above] {$p_2$} (E);
\end{tikzpicture}
\end{center}
\caption{Rewriting rule of type \eqref{rule2}\label{figurerule2}}
\end{figure}
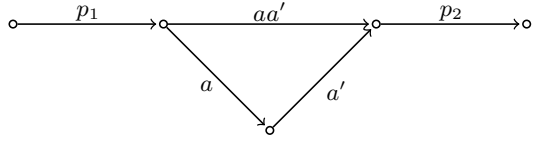
This completes the proof.
\end{proof}

As a corollary, we obtain that the restriction of the covering map $\p$ to any induced subcomplex is $\pi_1$-injective.

\begin{Cor}\label{pi1inj}
If $X\subseteq \til K_H$ is an induced subcomplex containing $w_0$, then $\p_*\colon \pi_1(X,w_0)\to H$ is injective.
\end{Cor}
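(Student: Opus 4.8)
The plan is to factor $\p_*\colon \pi_1(X,w_0)\to H$ through the inclusion $\iota\colon X\into \til K_H$. Writing $\iota_*\colon \pi_1(X,w_0)\to \pi_1(\til K_H,w_0)$ for the induced homomorphism, we have $\p_*=\p_*\circ \iota_*$, where now $\p_*$ denotes the map $\pi_1(\til K_H,w_0)\to G$ induced by the covering. The latter is injective, because a covering map is $\pi_1$-injective, and its image is exactly $H$ by the defining property of $\p$. Hence it suffices to prove that $\iota_*$ is injective; the corollary then follows at once: a composite of injective maps is injective, and its image lies in $H$.

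To prove $\iota_*$ injective, I would take a loop $p$ at $w_0$ in $X$ with $[p]\in \ker \iota_*$, that is, a loop $p$ that is null-homotopic in $\til K_H$. Passing to its reduced form, the standard dictionary between homotopy classes of paths in the cover $\til K_H$ and reduced words over $\til A\cup \til B$ (recalled just before Lemma~\ref{stayininduced}) shows that $p$ is homotopic rel base points to the unique reduced path $\red(p)$, whose label is the reduced form of the label of $p$. Since $p$ is null-homotopic, that reduced form is the empty word, so $\red(p)$ is a reduced path carrying the empty label, which forces it to be the constant path at $w_0$.

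The crux is then furnished directly by Lemma~\ref{stayininduced}: as $X$ is an induced subcomplex and $p$ is a path in $X$, the reduced path $\red(p)$ again lies in $X$ and $p$ is homotopic to $\red(p)$ rel base points \emph{inside} $X$. Because $\red(p)$ is the constant path, this exhibits $p$ as null-homotopic in $X$, so $[p]=1$ in $\pi_1(X,w_0)$. Thus $\ker \iota_*$ is trivial, and the corollary is proved.

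I do not expect a genuine obstacle, since the substantive work is already contained in Lemma~\ref{stayininduced}, whose whole point is that a path can be reduced without leaving the induced subcomplex $X$. The only step demanding any care is the identification of a loop that is null-homotopic in the cover with one whose reduced label is empty; but this is precisely the elementary correspondence between $\pi_1$ of a cover of $K$ and reduced words, which underlies the entire setup.
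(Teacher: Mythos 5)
Your proof is correct and is essentially the paper's argument: both reduce a loop of $X$ inside $X$ via Lemma~\ref{stayininduced} and then invoke the normal form theorem for free products (in your case, through the uniqueness of the reduced path representative in $\til K_H$) to conclude the reduced loop is constant. Your extra step of factoring $\p_*$ through the inclusion-induced map $\iota_*$ and citing $\pi_1$-injectivity of covering maps is just a repackaging of the same content, not a different route.
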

\begin{proof}
If $q$ is a loop in $X$ at $w_0$, then $q$ is homotopic in $X$, rel base points, to $\red(q)$ by Lemma~\ref{stayininduced}.  Thus without loss of generality we may assume that $q$ is reduced.  But then $\p_*([q])=1$ if and only if $q$ is an empty path by the normal form theorem for free products~\cite[Chapter~IV, Theorem~1.2]{LyndonandSchupp}.
\end{proof}

Let $\emptyset\neq S\subseteq H$.  We say that a vertex $v$ of $\til K_H$ is in the \emph{span} of $S$ if there is an element $s\in S$ such that the lift of $\red(s)$ at $w_0$ passes through $v$. The subcomplex of $\til K_H$ induced by the vertices in the span of $S$ is denoted $\Span(S)$ and is called the subcomplex \emph{spanned} by $S$. Note that $w_0\in \Span(S)$ and $\Span(S)$ is connected. Also observe that if $S$ is finite, then $\Span(S)$ has finitely many vertices.
Let us define the core of $\til K_H$ to be the subcomplex $\core(H)=\Span(H)$ spanned by $H$ itself.

The next proposition sets up a Galois correspondence between non-empty subsets of $H$ and induced subcomplexes of $\til K_H$ containing the base point $w_0$.

\begin{Prop}\label{galoisconnection}
Let $\emptyset\neq S\subseteq H$ and let $X$ be an induced subcomplex of $\til K_H$ containing $w_0$. Then $\Span(S)\subseteq X$ if and only if $S\subseteq \p_*(\pi_1(X,w_0))$.
\end{Prop}
\begin{proof}
By construction $S\subseteq \p_*(\pi_1(\Span(S),w_0))$ and so the only if statement is clear. For the converse, if $g\in S$, then there is a loop $p$ in $X$ at $w_0$ with $\p_*([p])=g$.  But then Lemma~\ref{stayininduced} implies that $\red(p)$, which is the lift of $\red(g)$ to $w_0$, is contained in $X$.  We conclude that $\Span(S)\subseteq X$.
\end{proof}

Taking $S=H$ we obtain the following corollary.

\begin{Cor}\label{smallestinduced}
The $\core(H)$ is the smallest induced subcomplex $X$ of $\til K_H$ containing $w_0$ such that $\p_*(\pi_1(X,w_0))=H$.
\end{Cor}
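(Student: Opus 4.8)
The plan is to deduce the corollary directly from Proposition~\ref{galoisconnection} by specializing the Galois correspondence to $S=H$, once I have checked that $\core(H)$ itself is a legitimate candidate, i.e.\ that $\p_*(\pi_1(\core(H),w_0))=H$. So the argument splits into two steps: first confirm that $\core(H)$ satisfies the defining condition with equality, and then invoke the proposition to get minimality.

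First I would verify that $\p_*(\pi_1(\core(H),w_0))=H$. The inclusion $\p_*(\pi_1(\core(H),w_0))\subseteq H$ is automatic, since $\core(H)$ is a connected subcomplex containing $w_0$, so the composite $\pi_1(\core(H),w_0)\to \pi_1(\til K_H,w_0)\xrightarrow{\p_*} G$ has image inside $\p_*(\pi_1(\til K_H,w_0))=H$. For the reverse inclusion I would take any $h\in H$ and use that $\core(H)=\Span(H)$ by definition: the lift of $\red(h)$ at $w_0$ is a loop (as $h\in H$), and the span construction places every vertex traversed by this lift into $\Span(H)$, so the loop lies in $\core(H)$ and has $\p_*$-image $h$. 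Hence $H\subseteq \p_*(\pi_1(\core(H),w_0))$, giving equality. (This is precisely the containment $S\subseteq \p_*(\pi_1(\Span(S),w_0))$ noted in the proof of Proposition~\ref{galoisconnection}, instantiated at $S=H$, together with the trivial reverse containment.) Next, for minimality, I would let $X$ be any induced subcomplex of $\til K_H$ containing $w_0$ with $\p_*(\pi_1(X,w_0))=H$. In particular $H\subseteq \p_*(\pi_1(X,w_0))$, so Proposition~\ref{galoisconnection} with $S=H$ yields $\core(H)=\Span(H)\subseteq X$. Thus $\core(H)$ is an induced subcomplex through $w_0$ with the correct fundamental-group image that is contained in every other such subcomplex, i.e.\ it is the smallest one under inclusion.

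I do not expect a genuine obstacle here, since the essential content is already packaged in Proposition~\ref{galoisconnection}. The only point that requires care is confirming that $\core(H)$ meets the condition with \emph{equality} rather than mere containment; this is exactly what the verification in the second step supplies, and it is what licenses instantiating the Galois correspondence at $S=H$ to conclude minimality. (The $\pi_1$-injectivity of Corollary~\ref{pi1inj} is not strictly needed for the equality, as the upper containment is automatic, but it reassures that these images behave as subgroups of $H$ throughout.)
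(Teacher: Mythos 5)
Your proposal is correct and matches the paper's approach exactly: the paper derives this corollary simply by taking $S=H$ in Proposition~\ref{galoisconnection}, which is precisely what you do, with the added (and sound) detail of verifying that $\p_*(\pi_1(\core(H),w_0))=H$ holds with equality.
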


The next corollary shows that $\core(H)$ is determined by a generating set of $H$.

\begin{Cor}\label{generates}
Let $\emptyset\neq S\subseteq H$.  Then $\Span(S)=\Span(\langle S\rangle)$.  In particular, if $\emptyset\neq S$ generates $H$, then $\core(H)=\Span(S)$.
\end{Cor}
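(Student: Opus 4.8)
The plan is to derive everything from the Galois correspondence of Proposition~\ref{galoisconnection}. The one genuine idea is that the image $\p_*(\pi_1(X,w_0))$ of a fundamental group is a \emph{subgroup} of $H$, so as soon as it contains $S$ it automatically contains the subgroup $\langle S\rangle$ generated by $S$; feeding this observation back into the correspondence will force $\Span(\langle S\rangle)$ to sit inside $\Span(S)$.

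First I would dispose of the easy inclusion $\Span(S)\subseteq \Span(\langle S\rangle)$. This is immediate from the definition of span, since every vertex lying on the lift of $\red(s)$ for some $s\in S\subseteq \langle S\rangle$ is, a fortiori, a vertex in the span of $\langle S\rangle$; alternatively one can apply the ``only if'' half of Proposition~\ref{galoisconnection} to $X=\Span(\langle S\rangle)$, noting $S\subseteq \langle S\rangle\subseteq \p_*(\pi_1(\Span(\langle S\rangle),w_0))$.

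For the reverse inclusion, set $X=\Span(S)$, which is an induced subcomplex containing $w_0$. The ``only if'' direction of Proposition~\ref{galoisconnection}, applied with the set $S$ and the subcomplex $X=\Span(S)$, gives $S\subseteq \p_*(\pi_1(\Span(S),w_0))$. Since $\p_*(\pi_1(\Span(S),w_0))$ is a subgroup of $H$, it is closed under products and inverses, and hence $\langle S\rangle\subseteq \p_*(\pi_1(\Span(S),w_0))$. Now I apply the ``if'' direction of Proposition~\ref{galoisconnection}, this time with the non-empty subset $\langle S\rangle\subseteq H$ and the \emph{same} subcomplex $X=\Span(S)$: the inclusion $\langle S\rangle\subseteq \p_*(\pi_1(X,w_0))$ yields $\Span(\langle S\rangle)\subseteq X=\Span(S)$. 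Combining the two inclusions gives $\Span(S)=\Span(\langle S\rangle)$.

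Finally, the ``in particular'' statement is simply the special case $\langle S\rangle=H$: if $S$ generates $H$, then $\Span(S)=\Span(\langle S\rangle)=\Span(H)=\core(H)$. There is no real obstacle here; the only points requiring attention are the bookkeeping of which set is plugged into each half of the Galois correspondence, and the elementary observation that the $\p_*$-image of a fundamental group is closed under the group operations.
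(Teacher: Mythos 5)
Your proof is correct and follows essentially the same route as the paper: both obtain the trivial inclusion $\Span(S)\subseteq\Span(\langle S\rangle)$ directly, then use the fact that $\p_*(\pi_1(\Span(S),w_0))$ is a subgroup containing $S$ (hence containing $\langle S\rangle$) and apply Proposition~\ref{galoisconnection} to conclude $\Span(\langle S\rangle)\subseteq\Span(S)$. The only cosmetic difference is that you invoke the ``only if'' half of the Galois correspondence for the containment $S\subseteq\p_*(\pi_1(\Span(S),w_0))$, where the paper simply cites it as true by construction; these are the same observation.
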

\begin{proof}
Trivially, $\Span(S)\subseteq \Span(\langle S\rangle)$.  For the converse, observe that $S\subseteq \p_*(\pi_1(\Span(S),w_0))$ and hence $\langle S\rangle \subseteq \p_*(\pi_1(\Span(S),w_0))$.  Proposition~\ref{galoisconnection} now yields that $\Span(\langle S\rangle)\subseteq \Span(S)$.
\end{proof}

\begin{Cor}\label{finitecorefg}
Suppose that $H$ is finitely generated.  Then $\core(H)$ has finitely many vertices.
\end{Cor}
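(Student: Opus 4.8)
The plan is to reduce everything to Corollary~\ref{generates} together with the vertex-counting observation already recorded just before the definition of $\core(H)$. Since $H$ is finitely generated, I would fix a finite generating set $S$ of $H$. The definition of $\Span(S)$ requires $S$ to be non-empty, so to cover the degenerate case $H=\{1\}$ (and to avoid any fuss about whether a chosen generating set happens to be empty) I would simply enlarge $S$ to $S\cup\{1\}$ if necessary; this is harmless because $1\in H$, it does not change $\langle S\rangle=H$, and $\red(1)$ is the empty word, so it contributes no new vertices to the span. Thus without loss of generality $\emptyset\neq S$ is a finite set generating $H$.

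Next I would invoke Corollary~\ref{generates}, which asserts precisely that for a non-empty generating set $S$ of $H$ one has $\core(H)=\Span(S)$. This identifies the core with the span of the finite set $S$, and it remains only to bound the number of vertices of $\Span(S)$.

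For the final count I would use the very definition of the span: a vertex of $\til K_H$ lies in $\Span(S)$ only if it is traversed by the lift at $w_0$ of some $\red(s)$ with $s\in S$. For each $s$, this lift is a reduced path of length $\ell(s)$, and therefore passes through at most $\ell(s)+1$ vertices. Consequently $\Span(S)$ has at most $\sum_{s\in S}\bigl(\ell(s)+1\bigr)$ vertices, which is a finite number since $S$ is finite. This is exactly the observation flagged earlier (``if $S$ is finite, then $\Span(S)$ has finitely many vertices''), now applied to the specific finite set $S$.

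I do not anticipate a genuine obstacle here: the statement is essentially an immediate corollary of the machinery already in place, and the only points requiring a moment's care are purely bookkeeping, namely ensuring $S$ is non-empty (handled by adjoining $1$, whose reduced form is empty) and counting the vertices on each lifted reduced path. The substantive content lives entirely in Corollary~\ref{generates}, whose proof in turn rests on Proposition~\ref{galoisconnection} and Lemma~\ref{stayininduced}.
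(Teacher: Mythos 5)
Your proposal is correct and follows exactly the route the paper intends: the paper states this corollary without proof because it is immediate from Corollary~\ref{generates} (take $\core(H)=\Span(S)$ for a finite generating set $S$) combined with the observation, recorded just after the definition of span, that $\Span(S)$ has finitely many vertices whenever $S$ is finite. Your extra bookkeeping (adjoining $1$ to handle a possibly empty generating set, and the explicit bound $\sum_{s\in S}(\ell(s)+1)$ on the vertex count) is harmless and fills in the details the paper leaves tacit.
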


Finiteness of the vertex set of the core is all that we use in the proof of Theorem~\ref{main}, so we now prove that $\core(H)$ is finite if and only if $H$ has finite Kurosh rank.  We recall here the definition of Kurosh rank; see~\cite{BurnsCompany,Ivanov,sykiotis} for details.

By the Kurosh theorem, we know that $H\leq G=A\ast B$ has a free product decomposition of the form
\begin{equation}\label{kuroshdecomp}
H=\left[\bigast_{i\in I}s_iH_is_i\inv\right]\ast\left[\bigast_{j\in J}t_jK_jt_j\inv\right]\ast F(H)
\end{equation}
where $H_i\leq A$, for $i\in I$, $K_j\leq B$ for $j\in J$, $F(H)$ is a free group intersecting no conjugate of $A$ or $B$, and the $s_i,t_j\in G$ run over certain sets of double coset representatives of $H\backslash G/A$ and $H\backslash G/B$, respectively.  The \emph{Kurosh rank} $\krank(H)$ of $H$, which may be infinite, is defined (relative to the splitting $G=A\ast B$) by
\[\krank(H) = \rank(F(H))+\#\{i\in I\mid H_i\neq \{1\}\}+\#\{j\in J\mid K_j\neq \{1\}\}.\]  One can show that this does not depend on any of the choices made in the Kurosh decomposition~\cite[Lemma~4]{Ivanov}.

There is an alternative description of the Kurosh rank in terms of Bass-Serre theory.  Let $T$ be the Bass-Serre tree for the free splitting $G=A\ast B$.  Then, because the edge stabilizers are trivial, there is a unique minimal $H$-invariant subtree $T_H$ for any subgroup $H\leq G$.  One has \[\krank(H)=\rank(\pi_1(H\backslash T_H))+\#\{Hv\in H\backslash T_H\mid G_v\neq \{1\}\}\] where $G_v$ is the stabilizer of the vertex $v\in T_H$.  The Kurosh rank is finite precisely when $H\backslash T_H$ is finite.  See~\cite{sykiotis} for details.

We shall require just the ``only if'' statement of the following theorem; therefore, the other implication will only be sketched.

\begin{Thm}\label{finitecore}
Let $H\leq G=A\ast B$.  Then $H$ has finite Kurosh rank if and only if $\core(H)$ has finitely many vertices.
\end{Thm}
\begin{proof}
Assume first that $\krank(H)<\infty$ and let \eqref{kuroshdecomp} be its Kurosh decomposition. Let $X$ be a finite generating set for $F(H)$ and let $I' = \{i\in I\mid  H_i\neq \{1\}\}$, $J'=\{j\in J\mid  K_j\neq \{1\}\}$; note that $I',J'$ are finite.  Then Corollary~\ref{generates} yields \[\core(H)=\Span\left(X\cup \bigcup_{i\in I'}s_iH_is_i\inv\cup \bigcup_{j\in J'}t_jK_jt_j\inv\right).\]  Clearly $X$ contributes only finitely many vertices to $\core(H)$.  It suffices to show that each $s_iH_is_i\inv$ with $i\in I'$ and $t_jK_jt_j\inv$ with $j\in J'$ contributes finitely many vertices to $\core(H)$.  We handle only the first case, as the second is identical.

Let $q_i$ be the lift of $\red(s_i)$ at $w_0$ and let $C_i$ be the subcomplex of $\til K_H$ induced by the vertices visited by the path $q_i$; note that $C_i$ has finitely many vertices.
Let $1\neq a\in H_i\leq A$.  Let $p$ be the lift at $w_0$ of the path $\red(s_i)a\red(s_i)\inv$ in $K$.  Then $p$ is a loop at $w_0$ (as $s_ias_i\inv \in H$) and so by uniqueness of path lifting, $p=q_ieq_i\inv$ where  $e$ is the edge out of the endpoint of $q_i$ labeled by $a$.  In particular, $e$ must be a loop edge.  Thus $p$ is contained in $C_i$ and, therefore, $\red(p)$ is contained in $C_i$ by Lemma~\ref{stayininduced}.  We conclude that the lift at $w_0$ of the reduced form of each element of $s_iH_is_i\inv$ is contained in $C_i$ and hence $s_iH_is_i\inv$ contributes only finitely many vertices to $\core(H)$.

That the finiteness of $\core(H)^{(0)}$ implies $\krank(H)<\infty$ is immediate from the next lemma.
\end{proof}

\begin{Lemma}\label{findKurosh}
Let $X\subseteq \til K_H$ be a connected induced subcomplex containing $w_0$ with $\p_*(\pi_1(X,w_0))=H$.  Let $\Gamma(X)$ be the bipartite graph with
\begin{align*}
V(\Gamma(X)) &= \pi_0(\pinv(K(A))\cap X)\cup \pi_0(\pinv (K(B))\cap X)\\
E(\Gamma(X)) &= \{e_v\mid v\in X^{(0)}\}
\end{align*}
where $v\mapsto e_v$ is a bijection from $X^{(0)}$ to $E(\Gamma(X))$.  The edge $e_v$ connects the respective components of $v$ in $\pinv(K(A))\cap X$ and $\pinv(K(B))\cap X$.

Then the equality \[\krank(H) = \rank(\pi_1(\Gamma(X)))+\#\{Y\in V(\Gamma(X))\mid \pi_1(Y)\neq \{1\}\}\] holds.  In particular, if $X$ has finitely many vertices, then $\Gamma(X)$ is finite and hence $\krank(H)<\infty$.
\end{Lemma}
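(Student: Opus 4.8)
The plan is to recognize $X$ as the total space of a graph of spaces whose underlying graph is precisely $\Gamma(X)$, to compute $\pi_1(X,w_0)$ by van Kampen, and then to match the resulting free product decomposition with the Kurosh decomposition~\eqref{kuroshdecomp} of $H$.

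First I would exploit that $K(A)\cap K(B)=\{v_0\}$ in the wedge $K$. Setting $X_A=X\cap\pinv(K(A))$ and $X_B=X\cap\pinv(K(B))$, this gives $X=X_A\cup X_B$ with $X_A\cap X_B=X^{(0)}$, the discrete vertex set of $X$ (every vertex lies over $v_0\in K(A)\cap K(B)$). Thus $X$ is built from the disjoint union of the components of $X_A$ and $X_B$ by gluing them at the vertices of $X$ exactly according to the incidence recorded by $\Gamma(X)$: each $v\in X^{(0)}$ lies in one component of $X_A$ and one of $X_B$, and $e_v$ joins precisely these two. This exhibits $X$ as a graph of spaces over $\Gamma(X)$ with vertex spaces the components $Y\in V(\Gamma(X))$ and one-point edge spaces. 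By van Kampen (equivalently, Scott--Wall), $\pi_1(X,w_0)$ is the fundamental group of the associated graph of groups, whose edge groups are trivial, so
\[
\pi_1(X,w_0)\cong\Bigl(\bigast_{Y\in V(\Gamma(X))}\pi_1(Y)\Bigr)\ast F,\qquad \rank(F)=\rank(\pi_1(\Gamma(X))),
\]
the free rank being the first Betti number of the underlying graph.

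Next I would transport this decomposition to $H$. By Corollary~\ref{pi1inj} the map $\p_*$ is injective on $\pi_1(X,w_0)$, and its image is $H$ by hypothesis, so $\p_*\colon\pi_1(X,w_0)\xrightarrow{\ \cong\ }H$. For an $A$-type vertex $Y$ the map $\p$ sends $Y$ into $K(A)$, hence $\p_*\pi_1(Y,y)\leq\pi_1(K(A),v_0)=A$; conjugating by the $\p$-image of a path in $X$ from $w_0$ to $y$ turns this into a subgroup $s_YH_Ys_Y\inv$ with $H_Y\leq A$, and symmetrically $B$-type vertices yield conjugates of subgroups of $B$. Therefore
\[
H\cong\Bigl(\bigast_{A\text{-type }Y}s_YH_Ys_Y\inv\Bigr)\ast\Bigl(\bigast_{B\text{-type }Y}t_YK_Yt_Y\inv\Bigr)\ast F
\]
has the form~\eqref{kuroshdecomp}. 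Since vertex groups embed in the fundamental group of a graph of groups, $\p_*$ restricts to an isomorphism $\pi_1(Y)\cong H_Y$ (resp.\ $K_Y$), so $\pi_1(Y)\neq\{1\}$ if and only if the corresponding free factor is non-trivial.

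Finally, invoking the independence of the Kurosh rank from the chosen decomposition~\cite[Lemma~4]{Ivanov}, I would read off
\[
\krank(H)=\rank(F)+\#\{Y\in V(\Gamma(X))\mid\pi_1(Y)\neq\{1\}\}=\rank(\pi_1(\Gamma(X)))+\#\{Y\mid\pi_1(Y)\neq\{1\}\}.
\]
For the last assertion, every component $Y$ contains a vertex of $X$ (all vertices lie in both $\pinv(K(A))$ and $\pinv(K(B))$), so no vertex of $\Gamma(X)$ is isolated; hence if $X^{(0)}$ is finite then $E(\Gamma(X))$ is finite, $\Gamma(X)$ is finite, and $\krank(H)<\infty$. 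The step I expect to require the most care is confirming that the van Kampen decomposition is genuinely a Kurosh decomposition of $H$ — in particular that the free factor $F$ meets no conjugate of $A$ or $B$ — so that~\cite[Lemma~4]{Ivanov} applies; this is where the interplay between the combinatorics of $\Gamma(X)$ and the Bass--Serre tree of $G=A\ast B$ (under which the vertex groups are elliptic and the free generators are hyperbolic) must be brought in.
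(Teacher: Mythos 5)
Your proof is correct and takes essentially the same route as the paper: both exhibit $X$ as a graph of spaces over $\Gamma(X)$ with vertex spaces the components of $\pinv(K(A))\cap X$ and $\pinv(K(B))\cap X$ and one-point edge spaces, compute $\pi_1(X,w_0)$ as the corresponding free product, and push this through the isomorphism $\p_*\colon \pi_1(X,w_0)\to H$ (Corollary~\ref{pi1inj}) to obtain a Kurosh decomposition of $H$. The step you flag as delicate --- that the resulting decomposition is genuinely a Kurosh decomposition, so that the invariance of $\krank$ applies --- is exactly the point the paper dispatches by invoking the standard covering-space proof of the Kurosh theorem.
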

\begin{proof}
Corollary~\ref{pi1inj} shows that $\p_*\colon \pi_1(X,w_0)\to H$ is an isomorphism.  The standard covering space proof of the Kurosh theorem shows that $X$ is homotopy equivalent to a graph of spaces  with underlying graph $\Gamma(X)$ where the vertex space associated to $Y\in V(\Gamma(X))$ is the space $Y$ and the edge space associated to $e_v$ is the one-point space $\{v\}$ with the obvious inclusions of edge spaces into vertex spaces.  This gives a free product decomposition of $\pi_1(X,w_0)$ which maps under $\p_*$ to a Kurosh decomposition of $H$.  One has $\pi_1(\Gamma(X))\cong F(H)$, the $H_i$ are the $p_*(\pi_1(Y))$ with $Y\in \pi_0(\pinv(K(A))\cap X)$ and the $K_j$ are the $p_*(\pi_1(Z))$ with $Z\in \pi_0(\pinv(K(B))\cap X)$ (with appropriate choices of base points).
\end{proof}

It is, in fact, not difficult to prove that $\Gamma(\core(H))\cong H\backslash T_H$ where $T_H$ is the unique minimal $H$-invariant subtree of the Bass-Serre tree $T$ for the splitting $G=A\ast B$.

The Kurosh rank is well known not to change under conjugation.  We shall need the second statement of the following corollary.

\begin{Cor}\label{conjugaterank}
If $H\leq G=A\ast B$ has finite Kurosh rank, then each conjugate $gHg\inv$ of $H$ has finite Kurosh rank.  Moreover, if $v$ is a vertex of $\core(H)$ and $g\in G$ is represented by the label of a reduced path in $\core(H)$ from $v$ to $w_0$, then $\core(gHg\inv)\subseteq \core(H)$.
\end{Cor}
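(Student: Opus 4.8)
The plan is to dispose of the first assertion quickly and then prove the inclusion $\core(gHg\inv)\subseteq\core(H)$ by relocating the base point of the covering space. Conjugation invariance of the Kurosh rank follows from the Bass--Serre description recalled above: the translation $x\mapsto gx$ carries the minimal $H$-invariant subtree $T_H$ onto $T_{gHg\inv}=gT_H$ and induces an isomorphism $H\backslash T_H\to gHg\inv\backslash T_{gHg\inv}$ matching vertex stabilizers up to conjugacy, so the two subgroups have equal Kurosh rank; in particular one is finite exactly when the other is. (Alternatively one simply invokes that the Kurosh rank is a conjugacy invariant.)

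For the ``moreover'' statement the key observation is that $\core(gHg\inv)$ may be computed inside the very same covering space $\til K_H$, after moving the base point from $w_0$ to $v$. Let $q$ be the given reduced path in $\core(H)$ from $v$ to $w_0$; since $q$ is reduced, its label is the reduced word $\red(g)$. The standard change-of-base-point computation shows $\p_*(\pi_1(\til K_H,v))=gHg\inv$: conjugating a loop $\alpha$ at $w_0$ to the loop $q\alpha\bar q$ at $v$ (with $\bar q$ the reverse of $q$) sends $\p_*[\alpha]\in H$ to $g\,\p_*[\alpha]\,g\inv$, and ranges over all of $\pi_1(\til K_H,v)$. Hence $(\til K_H,v)\to(K,v_0)$ is the pointed covering associated with $gHg\inv$, and under this identification $\core(gHg\inv)$ is precisely the subcomplex of $\til K_H$ spanned, relative to the base point $v$, by the lifts at $v$ of the reduced forms of the elements of $gHg\inv$.

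It therefore suffices to show that for each $s\in gHg\inv$ the lift of $\red(s)$ at $v$ lies in $\core(H)$. Write $s=ghg\inv$ with $h\in H$ and let $\ell_h$ be the lift of $\red(h)$ at $w_0$; this is a loop at $w_0$, and by the very definition $\core(H)=\Span(H)$ every vertex it visits lies in the span of $H$, so $\ell_h$ is a path in the induced subcomplex $\core(H)$. Consequently the concatenation $q\ell_h\bar q$ is a loop at $v$ lying entirely in $\core(H)$ (both $q$, $\bar q$, and $\ell_h$ do), and its label represents $ghg\inv=s$. By Lemma~\ref{stayininduced} the reduced path $\red(q\ell_h\bar q)$ is again contained in $\core(H)$; but this reduced path is exactly the lift of $\red(s)$ at $v$. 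Letting $h$ range over $H$ shows that every vertex in the span of $gHg\inv$ relative to $v$ is a vertex of $\core(H)$, which is the asserted inclusion; finiteness of $\krank(gHg\inv)$ when $\krank(H)<\infty$ is then also visible through Theorem~\ref{finitecore}, a subcomplex of a finite-vertex complex having finitely many vertices.

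The part I expect to require the most care is the bookkeeping in the base-point change: getting the direction of conjugation right (so that $v$ corresponds to $gHg\inv$ rather than to $g\inv Hg$) and justifying cleanly that $\core(gHg\inv)$ is legitimately identified with a span computed inside $\til K_H$ at $v$. Once that identification is secured, the argument is a direct application of Lemma~\ref{stayininduced}, in the same spirit as the proofs of Proposition~\ref{galoisconnection} and Theorem~\ref{finitecore}.
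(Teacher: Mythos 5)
Your proposal is correct. For the ``moreover'' inclusion it is essentially the paper's argument: the paper likewise relocates the base point (it takes $p$ to be the lift of $\red(g\inv)$ at $w_0$, ending at $v$, so that $\p_*(\pi_1(\til K_H,v))=gHg\inv$), conjugates loops at $w_0$ through this connecting path, and concludes with the induced-subcomplex machinery; the only cosmetic difference is that the paper cites Corollary~\ref{smallestinduced} (the Galois correspondence) to get the core inclusion, whereas you re-derive that step inline by applying Lemma~\ref{stayininduced} to $q\ell_h\bar q$ --- which is exactly the proof of Proposition~\ref{galoisconnection} transported to the base point $v$. Where you genuinely diverge is the first assertion: you invoke conjugation-invariance of the Kurosh rank via the Bass--Serre picture ($T_{gHg\inv}=gT_H$, with matching stabilizers), while the paper keeps the argument self-contained inside $\til K_H$ by forming the induced subcomplex $X$ on the vertices of $p$ together with those of $\core(H)$; this $X$ has finitely many vertices and satisfies $\p_*(\pi_1(X,v))=gHg\inv$, so Lemma~\ref{findKurosh} yields finiteness for an \emph{arbitrary} conjugate, not only those reachable by a path inside $\core(H)$. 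Your route is legitimate --- the paper itself remarks just before the corollary that conjugation-invariance is well known --- but the paper's construction buys a uniform proof of both statements at once, since the special case $X=\core(H)$ is precisely the ``moreover'' clause.
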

\begin{proof}
Let $v$ be the endpoint of the lift $p$ of the reduced word representing $g\inv$ beginning at $w_0$.  Then $\p_*\colon (\til K_H,v)\to \pi_1(K,v_0)$ is the pointed covering with $\p_*(\pi_1(\til K_H,v))=gHg\inv$.  If $h\in H$ and $q$ is the reduced loop at $w_0$ with $\p_*([q])=h$, then $p\inv qp$ is a loop at $v$ with $\p_*([p\inv qp])=ghg\inv$.  It follows that if $X$ is the subcomplex of $\til K_H$ induced by the vertices of the path $p$ together with the vertices of $\core(H)$, then $\p_*(\pi_1(X,v))=gHg\inv$.  Thus $\core(gHg\inv)\subseteq X$ by Corollary~\ref{smallestinduced}. But $X$ has finitely many vertices by construction.  Thus $gHg\inv$ has finite Kurosh rank.  Moreover, if $v$ is a vertex of $\core(H)$ and $p$ is chosen to be a reduced path in $\core(H)$, then $X=\core(H)$ in this case.  This proves the second statement.
\end{proof}

The next proposition generalizes that a Stallings graph of a finitely generated subgroup of a free group is a Schreier graph if and only if the subgroup has finite index.

\begin{Prop}\label{completecomplex}
Let $H\leq G=A\ast B$ be a subgroup of finite Kurosh rank.  Then the index of $H$ in $G$ is finite if and only if $\core(H)=\til K_H$.
\end{Prop}
\begin{proof}
If $\core(H)=\til K_H$, then since $[G:H]$ is the cardinality of the vertex set of $\til K_H$, we conclude that $H$ has finite index. Suppose conversely that $[G:H]=n<\infty$ and let $v$ be a vertex of $\til K_H$.  Chooses a reduced path $p$ from $w_0$ to $v$ and let $\gamma\in (\til A\cup \til B)^*$ be the label of $p$.  If $\gamma$ is not cyclically reduced, then we can find $c\in \til A\cup \til B$ such that $\gamma c$ is cyclically reduced.  Hence there is a cyclically reduced word $\beta$ such that the lift of $\beta$ at $w_0$ passes through $v$.  Now $\beta^{n!}$ is a reduced word representing an element of $H$.   Thus $v$ is a vertex of $\core(H)$.  We conclude $\core(H)=\til K_H$.
\end{proof}

\section{The proof of Theorem~\ref{main}}

In order to prove Theorem~\ref{main} we shall need some technical notions and results.  Let us assume that $H\leq  G=A\ast B$ is a subgroup of finite Kurosh rank.  We retain the notation of the previous section.

We now associate to each $g\in G$ a partial permutation of the finite set $V$ of vertices of $\core(H)$.
If $v$ is a vertex of $\til K_H$ and $g\in G$, then $vg$ denotes the endpoint of the lift of $\red(g)$ starting at $v$;  this is the right monodromy action of $G$ on $\til K_H^0$.  Let $\dom(g)$ to be the set of vertices $v\in V$ such that the lift of $\red(g)$ at $v$ is contained in $\core(H)$.  Define $\ran(g)=\{vg\mid v\in \dom(g)\}$ and note that $\rho_g\colon \dom(g)\to \ran(g)$ given by $v\mapsto vg$ is a bijection.  Also observe that $\rho_{g\inv}$ is the inverse partial permutation to $\rho_g$.  Define $\rank(g)=\#\dom(g)=\#\ran(g)$.  Clearly, we have $\rank(g)=\rank(g\inv)$.

If $\gamma\in (\til A\cup \til B)^*$, denote by $\ov{\gamma}$ the element of $G$ represented by $\gamma$.

\begin{Prop}\label{factorrank}
The following properties hold.
\begin{itemize}
\item [(a)] If $\alpha\beta\in (\til A\cup \til B)^*$ is reduced, then $\dom(\ov{\alpha\beta})\subseteq \dom(\ov{\alpha})$.
\item [(b)] If $\alpha\beta\in (\til A\cup \til B)^*$ is reduced, then $\ran(\ov{\alpha\beta})\subseteq \ran(\ov{\beta})$.
\item [(c)] If $\alpha\beta\gamma\in (\til A\cup \til B)^*$ is reduced,  then $\rank(\ov {\alpha\beta\gamma})\leq \rank (\ov \beta)$.
\end{itemize}
\end{Prop}
\begin{proof}
By uniqueness of path lifting, if the lift of $\alpha\beta$ at $v$ is contained in $\core(H)$, then the same is true for the lift of $\alpha$ at $v$. This proves (a).  Item (b) is dual.  The final claim follows by (a) and (b) because $\rank(\ov {\alpha\beta\gamma})\leq \rank(\ov{\alpha\beta})\leq \rank(\ov \beta)$.
\end{proof}

%The rank is also preserved under taking cyclic conjugates of cyclically reduced words.  More precisely, we have the following.
%
%\begin{Prop}\label{cyclicconj}
%Suppose that $\alpha\beta\in (\til A\cup \til B)^*$ is reduced.  Then $\rank(\ov{\alpha\beta})\leq \rank(\ov{\beta\alpha})$.  Moreover, equality holds if $\beta\alpha$ is reduced.
%\end{Prop}
%\begin{proof}
%Define an injective map $f\colon \dom(\ov{\alpha\beta})\to \dom(\ov{\beta\alpha})$ by $f(v)=v\ov\alpha$.
%\begin{figure}[bt]
%\begin{center}
%\begin{tikzpicture}[->,shorten >=1pt,%
%auto,node distance=2cm,semithick,
%inner sep=1pt,bend angle=45]
%\tikzset{every state/.style={minimum size=0pt}}.
%\node[state] (A) {\footnotesize{$\,\,v\,\,$}};
%\node[state] (B) [right of=A] {\scriptsize{$v\ov \alpha$}};
%\tikzstyle{every node}=[font=\footnotesize]
%\path (A) edge [bend left] node  {$\alpha$} (B)
%      (B) edge [bend left] node {$\beta$} (A);
%\end{tikzpicture}
%\end{center}
%\caption{Cyclic conjugates\label{cyccongfig}}
%\end{figure}
%Since $\beta\alpha$ labels a path in $\core(H)$ from $v\ov \alpha$ to $v$, it follows from Lemma~\ref{stayininduced} that $\red(\beta\alpha)$ labels a path in $\core(H)$ from $v\ov \alpha$ to $v$.  See Figure~\ref{cyccongfig}.  Thus $v\ov\alpha\in \dom(\ov{\beta\alpha})$.  Clearly $f$ is injective, being the restriction of $\rho_{\ov \alpha}$ to $\dom(\ov{\alpha\beta})\subseteq \dom(\ov \alpha)$. Thus $\rank(\ov{\alpha\beta})\leq \rank(\ov{\beta\alpha})$.
%
%If $\beta\alpha$ is reduced, then by symmetry we obtain the reverse inequality and hence the conclusion.
%\end{proof}

Our next lemma follows from the construction of $\core(H)$.

\begin{Lemma}\label{degree2}
Let $v\in V$ with $v\neq w_0$.  Then there are reduced paths $p_A(v)$ and $p_B(v)$ from $v$ to $w_0$ such that the first edge of $p_A(v)$ is labelled by a letter from $\til A$ and the first edge of $p_B$ is labelled by a letter from $\til B$.
\end{Lemma}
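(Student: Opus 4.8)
The plan is to read off the two paths directly from the definition of $\core(H)=\Span(H)$: every vertex $v\in V$ lies on the lift at $w_0$ of the reduced form of some element of $H$, and splitting that lift at $v$ manufactures the two required paths. First I would use that $v\in V$ with $v\neq w_0$ to extract an element $h\in H$ whose lift of $\red(h)$ at $w_0$ passes through $v$; this is precisely the defining property of being in the span of $H$. Since this lift is a loop based at $w_0$ and $v\neq w_0$, the vertex $v$ occurs at an interior position, so I can factor the lift as a path $p$ from $w_0$ to $v$ followed by a path $q$ from $v$ to $w_0$, with both $p$ and $q$ non-empty. As $\red(h)$ is reduced and every subword of a reduced word is reduced, both $p$ and $q$ are reduced paths; moreover the whole lift lies in $\core(H)$, since its vertices are in the span of $H$ and $\core(H)$ is induced, so $p$, $q$, and their reverses are paths in $\core(H)$.

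The crux is the alternation forced by reducedness at $v$. The last edge of $p$ and the first edge of $q$ carry consecutive letters of the reduced word $\red(h)$, and by definition of reduced (no factor $(x)(y)$ with $x,y$ in the same free factor) these two letters lie in different factors, say one in $\til A$ and one in $\til B$. Now $q$ is a reduced path from $v$ to $w_0$ whose first edge is labelled by the first letter of $q$, while the reverse $p\inv$ is a reduced path from $v$ to $w_0$ (the involution preserves reducedness) whose first edge is labelled by the inverse of the last letter of $p$, which lies in the same factor as that last letter. Since those two factors differ, exactly one of $q$, $p\inv$ begins with a letter of $\til A$ and the other with a letter of $\til B$; assigning them accordingly produces $p_A(v)$ and $p_B(v)$.

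I do not expect a genuine obstacle, as the statement is a structural consequence of the span construction. The one point deserving care is ensuring that $v$ is truly interior to the chosen lift, so that both halves $p$ and $q$ are non-empty and the alternation argument applies; this is guaranteed by $v\neq w_0$ together with the fact that the lift of $\red(h)$ returns to $w_0$. A minor secondary check is that reversing an edge path in $\til K_H$ is legitimate and inverts labels, which follows from the inverse-edge structure of $K$ and the covering property of $\p$.
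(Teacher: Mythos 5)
Your proof is correct and is essentially the paper's own argument: both split the lift of $\red(h)$ at the interior vertex $v$ into two halves, use reducedness to see that the two letters adjacent to $v$ lie in different free factors, and take the second half together with the reverse of the first half as the two required paths (the paper phrases the final assignment as ``replace $g$ by $g\inv$ if necessary,'' which is the same move). Your extra care about $v$ being interior and about the paths staying in $\core(H)$ is consistent with, and slightly more explicit than, the paper's write-up.
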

\begin{proof}
By construction of $\core(H)$, there is an element $g\in H$ such that the lift $q$ of $\red(g)$ at $w_0$ passes through $v$.  Say that $q=q_1q_2$ with the endpoint of $q_1$ being $v$.  Replacing $g$ by $g\inv$, if necessary, we may assume that the last edge of $q_1$ is labelled by an element of $\til A$ and the first edge of $q_2$ is labelled by an element of $\til B$.  Then we may take $p_A(v)=q_1\inv$ and $p_B(v)=q_2$.
\end{proof}

If $H\leq G$ is of finite Kurosh rank and infinite index, then $\core(H)\neq \til K_H$ by Proposition~\ref{completecomplex}.  Since $\til K_H$ is connected, there must be an edge $e$ of $\til K_H$ such that one endpoint of $e$ belongs to $\core(H)$ and the other endpoint does not.  Let us say that $H$ is \emph{well situated} if there is an edge $e$ at $w_0$ whose other endpoint does not belong to $\core(H)$. We introduce this notion only to simplify the argument, not for any essential purpose.

\begin{Lemma}\label{wellsituated}
Let $H\leq G$ be an infinite index subgroup of finite Kurosh rank.  Then there is a conjugate of $H$ which is well situated.
\end{Lemma}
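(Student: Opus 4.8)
The plan is to reduce the assertion to the failure of $\core(H)=\til K_H$ and then move the base point by conjugation onto a vertex that witnesses this failure. First I would invoke Proposition~\ref{completecomplex}: since $H$ has finite Kurosh rank and infinite index, $\core(H)\neq \til K_H$. As $\til K_H$ is connected, there is an edge $e$ of $\til K_H$ with exactly one endpoint in $\core(H)$; call that endpoint $u\in \core(H)^{(0)}$ and the other endpoint $u'\notin \core(H)^{(0)}$. The goal becomes to produce a conjugate of $H$ whose core is based at $u$ and still avoids $u'$, for then $e$ will be an edge at the base point leaving the core, which is exactly the definition of well situated.

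The key device is that conjugating $H$ keeps the same covering space $\til K_H$ and merely relocates the base point, as recorded in the proof of Corollary~\ref{conjugaterank}. Since $\core(H)$ is connected and contains both $w_0$ and $u$, there is a path from $u$ to $w_0$ inside $\core(H)$; by Lemma~\ref{stayininduced} I may take it reduced, and I let $g\in G$ be its label. Then $g$ is represented by the label of a reduced path in $\core(H)$ from $u$ to $w_0$, so Corollary~\ref{conjugaterank} applies and gives $\core(gHg\inv)\subseteq \core(H)$. Tracing the construction in that corollary's proof, the covering for $gHg\inv$ is $\til K_H$ based at the endpoint of the lift of $\red(g\inv)$ at $w_0$, which is precisely $u$.

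It then remains only to verify that $gHg\inv$ is well situated, i.e.\ that some edge at its base point $u$ leaves its core. The edge $e$ serves: it is incident to $u$ and its other endpoint $u'$ lies outside $\core(H)$, hence outside $\core(gHg\inv)\subseteq \core(H)$. This is exactly the well-situated condition for $gHg\inv$, completing the argument.

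The one point requiring genuine care — and the step I would treat as the main obstacle — is the base-point bookkeeping under conjugation. One must confirm that the reduced path chosen inside $\core(H)$ produces a conjugating element $g$ whose associated base point, in the sense of Corollary~\ref{conjugaterank}, is the chosen vertex $u$ rather than $w_0$; this hinges on matching the direction of the path (from $u$ to $w_0$) against the statement of the corollary and using that the involution preserves reducedness so that the reversed path is the lift of $\red(g\inv)$ at $w_0$. Once this is checked, the containment $\core(gHg\inv)\subseteq \core(H)$ transfers the property ``$u'$ is outside the core'' from $H$ to $gHg\inv$, and everything else is immediate.
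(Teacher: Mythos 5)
Your proof is correct and takes essentially the same route as the paper: both obtain the boundary edge $e$ from Proposition~\ref{completecomplex} and connectedness of $\til K_H$, move the base point to the endpoint of $e$ lying in $\core(H)$ by passing to the corresponding conjugate of $H$, and invoke Corollary~\ref{conjugaterank} to get that the conjugate's core sits inside $\core(H)$, so the outside endpoint of $e$ witnesses well-situatedness. The paper's version is merely terser, leaving implicit the base-point bookkeeping that you spell out.
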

\begin{proof}
Let $e$ be an edge of $\til K_H$ with one endpoint $v$ in $\core(H)$ and one endpoint outside $\core(H)$.  Then $K=\p_*(\pi_1(\til K_H,v))$ is a conjugate of $H$ and by Corollary~\ref{conjugaterank} we have $\core(K)$ is a subcomplex of $\core(H)$ with base point $v$.  Thus $K$ is well situated.
\end{proof}

The following technical lemma is the key to our proof of the conjecture of Karass and Solitar.

\begin{Lemma}\label{rank0}
Let $H\leq G$ be an infinite index subgroup of finite Kurosh rank which is well situated.  Then there is a cyclically reduced element $g\in G$ with $\rank(g)=0$.
\end{Lemma}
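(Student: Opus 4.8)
The plan is to produce the required element in two stages: first I would find a reduced word $\beta$ with $\rank(\ov\beta)=0$, and then enlarge it to a cyclically reduced word without raising the rank. The second stage is cheap: if $\beta$ is reduced with $\rank(\ov\beta)=0$, then prepending and/or appending a single letter of the appropriate type (drawn from $\til A$ or $\til B$, which is possible because both free factors are non-trivial) yields a reduced word $\alpha\beta\gamma$ whose first and last letters have opposite types, hence cyclically reduced; and Proposition~\ref{factorrank}(c) gives $\rank(\ov{\alpha\beta\gamma})\le \rank(\ov\beta)=0$. So everything reduces to constructing $\beta$.

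I would build $\beta$ by driving the rank down one unit at a time. Recall that $v\in\dom(\ov g)$ exactly when the whole monodromy trajectory $v, v\ov g$ (vertex by vertex) stays in the finite vertex set $V$ of $\core(H)$, and that the surviving trajectories of a reduced word $\beta_k$ sit at the positions $\ran(\ov{\beta_k})\subseteq V$. By Proposition~\ref{factorrank}(a), any reduced extension satisfies $\dom(\ov{\beta_k\sigma})\subseteq\dom(\ov{\beta_k})$, so it suffices at each stage to append a reduced word $\sigma$—with first letter of type opposite to the last letter of $\beta_k$, so that $\beta_k\sigma$ remains reduced—for which the lift of $\sigma$ at some current position $u\in\ran(\ov{\beta_k})$ leaves $\core(H)$. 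That one trajectory then drops out of the domain, so the rank strictly decreases, and iterating until the range is empty produces $\beta$ with $\rank(\ov\beta)=0$.

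The essential point is thus a single \emph{escape step}: from a prescribed vertex $u$, and with a prescribed type for the first letter, exhibit a reduced word whose lift at $u$ exits the core. Here I would exploit that $H$ is well situated: there is an edge at $w_0$, say labelled $x_0\in\til A$, whose other endpoint lies outside $\core(H)$. Since $\core(H)$ is connected and finite, Lemma~\ref{degree2} lets me steer the chosen trajectory by a reduced path inside the core to $w_0$ while controlling the first letter, after which I would leave the core along $x_0$.

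The hard part, and what I expect to require the most care, is the parity bookkeeping forced by reducedness: words alternate between $\til A$ and $\til B$, so the type-$\til A$ escape letter $x_0$ can be appended only when the trajectory reaches $w_0$ having just read a letter of $\til B$. I would reconcile this with the alternation already dictated by $\beta_k$ through a case analysis at $w_0$. If some $\til B$-edge at $w_0$ also leaves the core, then $w_0$ admits escapes of both types and the arrival parity is irrelevant (append whichever escape has the correct type). Otherwise every $\til B$-edge at $w_0$ stays in $\core(H)$, and I would first make a $\til B$-move—in particular a loop arising from a non-trivial vertex stabilizer—to fix the parity before exiting along $x_0$. Showing that this can always be completed, i.e.\ that every vertex is escapable with either prescribed starting type and that the procedure terminates, is the delicate step; I would establish it by propagating escapability through the finite connected core from the boundary vertex $w_0$.
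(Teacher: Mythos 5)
Your overall architecture---iterated rank descent via escape steps, followed by the cheap cyclic-reduction fix using Proposition~\ref{factorrank}(c)---is sound and is essentially the paper's argument (the paper phrases the descent as ``take $g$ of minimum rank and derive a contradiction if $\rank(g)>0$,'' which is your iteration in different clothing). The genuine gap is exactly in the step you yourself flag as delicate, and the resolution you propose for it fails. If every $\til B$-edge at $w_0$ stays in $\core(H)$, you propose to fix the arrival parity by a $\til B$-loop at $w_0$ ``arising from a non-trivial vertex stabilizer''; but a $\til B$-labelled loop at $w_0$ exists if and only if $H\cap B\neq\{1\}$, and nothing in the hypotheses (infinite index, finite Kurosh rank, well situated) forces this---for instance $H$ can be a free subgroup meeting no conjugate of $A$ or $B$. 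If instead your $\til B$-move lands at a vertex $w_0b\neq w_0$ inside the core, you must return to $w_0$ before you can use the escape edge $x_0$, and Lemma~\ref{degree2} controls only the \emph{first} letter of the return path, not its last; so you may again arrive at $w_0$ having just read an $\til A$-letter, and the parity problem recurs. Within your stated framework, where every concatenation must be literally reduced, the ``propagation of escapability from $w_0$'' therefore never gets started.

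The missing idea, which is what the paper's proof turns on, is that you should \emph{not} insist on literal reducedness at the final junction: Proposition~\ref{factorrank}(a) only needs $\red(g)$ to survive as a prefix of the reduced form of the extended word, and the exit condition only concerns the endpoint of the lift, which is invariant under reduction because the monodromy action $v\mapsto vg$ depends only on the group element. Concretely, normalize (as the paper does) so that the last letter of the current word $g$ lies in $\til B$; then only type-$\til A$ escapes are ever needed. From $u=vg\neq w_0$ take the Lemma~\ref{degree2} path $q$ to $w_0$ whose label $g'$ \emph{begins} with an $\til A$-letter, ignore its last letter entirely, and append the escape letter $a$. Total cancellation $g'a=1$ is impossible, since it would give $w_0a=u\cdot\ov{g'a}=u\in\core(H)$; hence $\red(g'a)$ is non-empty and still begins with an $\til A$-letter, so $\red(gg'a)=\red(g)\red(g'a)$ has $\red(g)$ as a prefix, giving $\dom(\ov{gg'a})\subseteq\dom(g)$, while $v\cdot\ov{gg'a}=w_0a\notin\core(H)$ removes $v$ from the domain and strictly drops the rank. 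With this observation your escape step works from every vertex with no parity case analysis and no stabilizer loops, and your induction closes; everything else in your proposal (the descent bookkeeping and the cyclic-reduction finish) is correct as written.
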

\begin{proof}
Since $H$ is well situated, there is an edge $e$ at $w_0$ whose endpoint is not in $\core(H)$.  Without loss of generality, we may assume that $e$ is labelled by $a\in \til A$.  We now show that there is an element $g\in G$ of rank zero.

Choose $g\in G$ of minimum rank.  Multiplying $g$ on the right by an element of $\til B$, if necessary, and using Proposition~\ref{factorrank}, we may assume without loss of generality that the last symbol in $g$ is from $\til B$.  Suppose that $\rank(g)>0$ and let $v\in \dom(g)$. Let $p$ be the lift of $\red(g)$ at $v$.

There are two cases.  Suppose first that $vg=w_0$.  Then $p$ ends at $w_0$ and $pe$ is a reduced path with label $ga$.  Thus $v\notin \dom(ga)$.  On the other hand, Proposition~\ref{factorrank} implies that $\dom(ga)\subseteq \dom(g)$.  We conclude $\rank(ga)<\rank(g)$, a contradiction.

Next suppose that $vg\neq w_0$.  Lemma~\ref{degree2} implies that there is a reduced path $q$ from $vg$ to $w_0$ such that the first symbol of the label of $q$ belongs to $\til A$. Let $g'$ be the element of $G$ represented by the label of $q$.  Then $\red(gg')=\red(g)\red(g')$ and $\ell(g')\geq 1$.  Thus $\red(g)$ is still a prefix of $\red(gg'a)$ and hence $\dom(gg'a)\subseteq \dom(g)$ by Proposition~\ref{factorrank}.   But $\red(gg'a)$ labels the path $\red(pqe)$ from $v$ to $w_0a\notin \core(H)$. Therefore, $v\notin \dom(gg'a)$ and  so $\rank(gg'a)<\rank(g)$, again a contradiction.  We conclude that $\rank(g)=0$.

If $g$ is cyclically reduced, we are finished.  Otherwise, by right multiplying $g$ by an appropriate element of $\til A\cup \til B$ we obtain a cyclically reduced element, which has rank zero by an application of Proposition~\ref{factorrank}.
\end{proof}

We are now ready to prove Theorem~\ref{main} and its corollary, Theorem~\ref{main2}.
\begin{proof}[Proof of Theorem~\ref{main}]
Suppose $H$ has finite index.  Then it contains a non-trivial normal
subgroup, and in a free product any two non-trivial normal subgroups
have non-trivial intersection, cf.~\cite[pg.~211]{KS1}.

Next suppose that $H$ has infinite index and finite Kurosh rank.  We must show that there is a non-trivial normal subgroup $N$ with $H\cap N=1$.  Replacing $H$ by a conjugate, we may assume that $H$ is well situated by Lemma~\ref{wellsituated}.  By Lemma~\ref{rank0}, there is a cyclically reduced element $g$ of rank zero.  Let $N$ be the normal subgroup generated by $g^6$. We claim that $H\cap N=\{1\}$.  First note that if $1\neq z\in H\cap N$, then $\red(z)$ lifts to a loop at $w_0$ in $\core(H)$.  Thus it suffices to show that there is no $z\in N\setminus \{1\}$ such that $\red(z)$ lifts to a loop in $\core(H)$ at some vertex.

Suppose that $z\in N\setminus \{1\}$ is such that $\red(z)$ lifts to a loop in $\core(H)$ at some vertex $v$. Then $\red(z)=\red(u)\red(z')\red(u)\inv$ with $\red(z')$ cyclically reduced.  Clearly, $\red(z')$ lifts to a loop in $\core(H)$ at the vertex $vu$ and so without loss of generality we may assume that $z$ is cyclically reduced.
By~\cite[Theorem~3.1]{DuncanHowie}, there is a cyclic conjugate $\zeta$ of $\red(z)$ containing a factor $\gamma$ of length at least $3\ell(g)-1>\ell(g)$ that is also a factor of a cyclic conjugate of $\red(g)^{\pm 6}$.  Thus $\gamma$ has $\red(g)^{\pm 1}$ as a factor. Now $\rank(g\inv)=\rank(g)=0$, and so $\rank(\ov \gamma)=0$ by Proposition~\ref{factorrank}.  Therefore, $\rank(\ov \zeta)=0$ (again by Proposition~\ref{factorrank}).  But since $z$ labels a loop in $\core(H)$ at some vertex, the same is clearly true of its cyclic conjugate $\zeta$.  Thus $\rank(\ov \zeta)>0$.  This contradiction completes the proof.
\end{proof}

\def\malce{\mathbin{\hbox{$\bigcirc$\rlap{\kern-7.75pt\raise0,50pt\hbox{${\tt
  m}$}}}}}\def\cprime{$'$} \def\cprime{$'$} \def\cprime{$'$} \def\cprime{$'$}
  \def\cprime{$'$} \def\cprime{$'$} \def\cprime{$'$} \def\cprime{$'$}
  \def\cprime{$'$}

%\bibliographystyle{abbrv}
%\bibliography{standard2}

\end{document}